\def \non-ort {{\not\!\!\bot}}
\def \Z  {{\mathbb Z}}
\def \N {{\mathbb N}}
\def \S#1 {{\rm S}#1 }
\def\proclaim #1. #2\par{\medbreak
  \noindent{\bf#1 \enspace}{\sl#2}\par
  \ifdim\lastskip<\medskipamount \removelastskip\penalty55\medskip\fi}
\theoremstyle{plain}
\newtheorem{theorem}{Theorem}
\newtheorem{proposition}[theorem]{Proposition}
\newtheorem{corollary}[theorem]{Corollary}
\theoremstyle{definition}
\newtheorem{definition}[theorem]{Definition}
\newtheorem{example}[theorem]{Example}
\theoremstyle{plain}
\newtheorem{remark}[theorem]{Remark}
\title{On the number of independent orders}
\author{Kota Takeuchi\thanks{Institute of Mathematics, University of Tsukuba.  Partially supported by KAKENHI 19K20209i} \ \ and \ 
Akito Tsuboi\thanks{Institute of Mathematics, University of Tsukuba.  Partially supported by KAKENHI 17K05342.}}
\date{}
\begin{document}
\maketitle

\begin{abstract}
We investigate a model theoretic invariant $\kappa_{srd}^m(T)$, which was introduced by Shelah\cite{shelah}, and prove that $\kappa_{srd}^m(T)$ is sub-additive. 
 When $\kappa_{srd}^m(T)$ is infinite, this gives the equality $\kappa^m_{srd}(T)=\kappa^1_{srd}(T)$, answering a question 
 in \cite{shelah}. We apply the same proof method to analyze another invariant $\kappa^m_{ird}(T)$, and show that it is also sub-additive, improving a result in \cite{shelah}. 
\end{abstract}

\section{Introduction}
It is a basic fact that if a theory $T$ is unstable then we can find an unstable 1-formula $\varphi(x,y)$ that witnesses the instability of $T$. (Recall that a formula $\varphi(x,y)$ is called a 1-formula if the length $|x|$ of $x$ is 1.) 
Similar situations are true for some other properties of theories, such as  $TP$, $TP_1$, $TP_2$, $IP$, $IP_n$ and $SOP$. 
Namely, if a theory $T$ has one of these properties, then we can find a $1$-formula 
witnessing the property. 
So, it is of interest to know whether such a $ 1 $-formula exists as a witness for other important  properties of $ T $.
The present paper deals with this kind of question, and we are concerned with the number of independent definable orders existing in the monster model ${\cal M}$ of 
$T$. 

\medbreak
Shelah \cite{shelah} defined three invariants $\kappa_{inp}^m(T)$, $\kappa_{srd}^m (T)$ and $\kappa_{ird}^m (T)$, where $m$ is a positive integer. 
The first, second, and third invariants are concerning the number of independent partitions, independent orders, and independent strict orders existing in $ {\cal M} ^ m $, respectively.
In \cite{shelah}, it was shown that $\kappa_{ird}^m(T)$ does not change its value as $m$ varies  (at least if it is an infinite regular cardinal).
Then it was asked if a corresponding result holds for $\kappa_{inp}^m(T)$ and $\kappa_{srd}(T)$ (\cite[Questions 7.5 and 7.9]{shelah}).
The question about $\kappa_{inp}^m(T)$ was solved in \cite{chernikov2}.
Although the terminology is different, 
Chernikov essentially proved the inequality $\kappa_{inp}^{n+m}(T)\leq  \kappa_{inp}^{n}(T)\times\kappa_{inp}^{m}(T)$, which yields $\kappa_{inp}^m(T)=\kappa_{inp}^1(T)$ if 
$\kappa_{inp}^m(T)$ is infinite. 
Furthermore, he conjectured that the invariant is sub-additive, i.e. $\kappa_{inp}^{n+m}(T)  +1 \leq  \kappa_{inp}^{n}(T)+\kappa_{inp}^{m}(T)$.
This conjecture arose in connection with \cite{kaplan}, in which it was shown that the dp-rank is sub-additive. 
It is known that dp-rank coincides with the rank counting the number of  independent partitions under the assumption of NIP.
Several other invariants (e.g. $\kappa_{cdt}, \kappa_{sct}$) introduced in \cite{shelah} were studied in \cite{chernikov1}, and similar type of results were obtained.

\medbreak
It seems however that $\kappa_{srd}^m(T)$ has not been studied well, and there seems to be no answer to Shelah's question on $\kappa_{srd}^m(T)$. 
Since it has been shown that if $T$ is NIP then $\kappa_{ird}^m(T)=\kappa_{srd}^m(T)$, 
there is no difference between those two invariants under the assumption of NIP.
Under the assumption of NIP,  in \cite{lynn}, the condition $\kappa_{ird}^m(T)<n$ was characterized by using the notion of collapse of indiscernible sequences. 
In this paper we examine how the value $\kappa^m_{srd}(T)$ changes as $m$ changes without any assumption on $T$ (such as NIP). 
We will prove that $\kappa^m_{srd}(T)$ is sub-additive, which gives a positive answer to a question by Shelah. 
The concept of mutually indiscernible sequences plays a central role in our proof technique.
We will also see that the same technique can be applied when analyzing $\kappa^m_{ird}(T)$, and will prove that the invariant is also sub-additive. 
This gives  an improvement of a result in \cite{shelah} on $\kappa^m_{ird}(T)$, when 
it is finite.
\medbreak
Now, we explain some details of $\kappa_{srd}^m(T)$. 
A complete theory  $T$ is said to have the strict order property if  there is a formula $\varphi(x_0,\dots, x_{m-1},y_0, \dots, y_{n-1})$ and parameters $b_i \in {\cal M}^n$ $(i \in \omega)$ 
such that $\Phi:=\{\varphi({\cal M},b_i): i \in \omega\}$ becomes a strictly increasing sequence of 
uniformly defined definable sets of 
${\cal M}^m$, where $\varphi({\cal M},b_i)=\{a \in {\cal M}^m: {\cal M} \models \varphi(a,b_i)\}$. 
Let $\Phi_0=\{D_i: i \in \omega\}$ and $\Phi_1=\{E_i: i \in \omega\}$ be two such strictly increasing 
sequences consisting of subsets of ${\cal M}^n$. 
We say $\Phi_0$ and $\Phi_1$ are independent if 
$(D_{i+1} \setminus D_i) \cap (E_{j+1} \setminus E_j) \neq \emptyset$, for any $(i,j) \in \omega^2$.
We can naturally define the independence among a larger number of  $\Phi_i$'s. 
Then $\kappa_{\mathrm{srd}}^n(T)$ is defined as the minimum cardinal $\kappa$ 
for which there is no family  $\{\Phi_i: i< \kappa\}$ of such independent sequences. 
(See Definition \ref{def1}, for a more precise definition.) 
We put $\kappa_{\mathrm{srd}}(T)=\sup_{n \in \omega} \kappa_{\mathrm{srd}}^n(T)$. 
%
If there is a (non-trivial) definable order $<$ on ${\cal M}$, then clearly $T$ has the strict order property, and 
$ \kappa_{\mathrm{srd}}^n(T) \geq n+1$.  
Indeed, for an increasing sequence $a_0<a_1< \dots  \in {\cal M}$, 
if we let $X_{i,j}=\{(b_0,\dots,b_{n-1}) \in {\cal M}^n: b_i <a_j\}$, then $\Psi_i=\{X_{i,j}: j \in \omega\}$ $(i<n)$ 
will witness $ \kappa_{\mathrm{srd}}^n(T) \geq n+1$. 

\medbreak
We investigate the invariants $\kappa_{\mathrm{srd}}^n(T)$ and $\kappa_{\mathrm{ird}}^m(T)$,  and prove the following:

\medbreak\noindent
{\bf Theorem A.}
$\kappa_{\mathrm{srd}}^{m+n}(T) +1  \leq  \kappa_{\mathrm{srd}}^m(T)+\kappa_{\mathrm{srd}}^n(T)$. 
\medbreak\noindent
{\bf Theorem B.}
Suppose $\kappa_{\mathrm{srd}}^m(T) \geq \omega$. 
Then $\kappa_{\mathrm{srd}}(T)=\kappa_{\mathrm{srd}}^1(T)=\kappa_{\mathrm{srd}}^m(T)$. 
\medbreak\noindent
{\bf Theorem C.}
$\kappa_{\mathrm{ird}}^{m+n}(T) +1 \leq  \kappa_{\mathrm{ird}}^m(T)+\kappa_{\mathrm{ird}}^n(T)$. 
\medbreak\noindent

\section{Preliminaries}
Let $L$ be a language and $T$ a 
 complete $L$-theory with an infinite model. 
We work in a monster model ${\cal M} \models T$ with a very big saturation. For a set $A \subset {\cal M}$, $L(A)$ denotes the language obtained from $L$ by augmented by the constants for elements in $A$. 
Finite tuples in ${\cal M}$ are denoted by $a,b, \dots$ .
The letters $x,y,\dots$ are used to denote finite tuples of variables. The length  of $x$ 
is denoted by $|x|$. 
Formulas are denoted by $\varphi,\psi,\dots$.  For a formula $\varphi$ and a condition $(*)$, we write $\varphi^{\text{if $(*)$}}$ to denote the formula $\varphi$ if $(*)$ is true,   and 
$\neg \varphi$ if $(*)$ is false. In this paper, we are mainly interested in formulas of the form $\varphi(x,b)$, where $b$ is a parameter from ${\cal M}$. If $|x|=m$, this formula $\varphi(x,b)$ (or $\varphi(x,y)$) will be called an $m$-formula. 
The definable set defined by $\varphi(x,b)$ in ${\cal M}$ is denoted by $\varphi({\cal M},b)$. 

\medbreak
Standard set-theoretic notation will be used. 
\begin{definition}\label{pattern}
Let $\kappa$ be a (finite or infinite) cardinal. 
Let  $(\varphi_i(x;y_i))_{i\in\kappa}$ be a sequence of formulas, and  $(b_{i,j})_{i\in \kappa, j\in \omega}$ a sequence of tuples, 
where $|b_{i,j}|=|y_i|$ for all $i,j$. 
\begin{enumerate}
\item
The pair $\langle (\varphi_i(x;y_i))_{i\in\kappa}, (b_{i,j})_{i\in \kappa, j\in \omega} \rangle$ will be called an ird-pattern of 
width $\kappa$, if it satisfies: 
\begin{enumerate}
\item for any $\eta \in \omega^\kappa$, $\{\varphi_i(x,b_{i,j})^{\text{if $(j \geq \eta(i))$}}: i \in \kappa, j\in \omega\}$ is consistent. 
\end{enumerate}

\item
The pair $\langle (\varphi_i(x;y_i))_{i\in\kappa}, (b_{i,j})_{i\in \kappa, j\in \omega} \rangle$ will be called an srd-pattern of 
width $\kappa$, if it satisfies: 
\begin{enumerate}
\item
for any $\eta\in \omega^\kappa$, $\{\varphi_i(x; b_{i,j})^{\mathrm{if}(j\geq \eta(i))}: i\in \kappa, j\in\omega\}$ is consistent,
\item
for each $i\in\kappa$ and $j\in\omega$, $\varphi({\cal M}, b_{i,j}) \subsetneq \varphi({\cal M}, b_{i,j+i})$.
\end{enumerate}
\end{enumerate}
\end{definition}

\begin{definition}\label{def1} 
Let $ * \in\{\text{ird, srd}\}$.
$\kappa^m_{*}(T)$ is the minimum cardinal $\kappa$ such that (in $T$) there is no $*$-pattern of width $\kappa$ witnessed by $m$-formulas $\varphi_i(x;y_i)$ $(i\in\kappa)$.
We write $\kappa^m_{*}(T)=\infty$, if there is no such $\kappa$. 
Also $\kappa_{*}(T)$ is defined as $\sup_{m\in\omega}\kappa^m_{*}(T)$.
\end{definition}

\begin{remark}
\begin{enumerate}
\item
$\kappa^m_{\mathrm{ird}}(T)>1$ if and only if there is an unstable formula $\varphi(x,y)$ with $|x|=m$.
$\kappa_{\mathrm{ird}}(T)>1$ if and only if $T$ is unstable.
\item
$\kappa^m_{\mathrm{srd}}(T)>1$ if and only if there is a $\varphi(x,y)$ with $|x|=m$ having the strict order property. $\kappa_{\mathrm{srd}}(T)>1$ if and only if $T$ has the strict order property.
\end{enumerate}
\end{remark}
If $\kappa < \kappa^m_{srd}(T)$, then there are $\kappa$-many $\varphi_i$'s and a set $B=(b_{i,j})_{i \in \kappa, j \in \omega}$ satisfying the conditions (a) and (b) of the item 2 in Definition \ref{pattern}. 
The condition (b) states that each $\varphi_i$ defines a strict order on ${\cal M}^m$, and the condition (a) states that 
the orders defined by $\varphi_i$'s are independent. 
If $\kappa_{\mathrm{srd}}^m(T)=\infty$, then there is a set $\{\varphi_i(x,y_i):i<|T|^+\}$  witnessing the conditions.  
So, by choosing an infinite subset of $|T|^+$, we can assume $\varphi_i=\varphi$ for all $i<\omega$. 
Conversely, if $\kappa_{srd}(T) \geq \omega$ and if the witnessing formulas satisfy $\varphi_i = \varphi$ $(i<\omega)$, then by compactness, we see that there are arbitrarily many independent strict orders. 
Notice also that if $\kappa^m_{srd}(T)=\infty$ then $T$ has the independence property.  
\begin{example}
Let $T$ be the theory of $\N=(\N,0,1,+,\cdot)$. 
Let $\varphi(x,y_0,y_1)$ be the formula asserting that the exponent of the $y_0$-th prime in the prime factorization of $x$ is smaller than $y_1$. 
Then, for each $i$, $\Phi_i:=\{\varphi({\cal M},i,j)\}_j$ forms an increasing sequence of definable sets. Moreover, $\Phi_i$'s  are independent, 
so we have $\kappa^1_{srd}(T)=\infty$. 
\end{example}

Indiscernibility is a substantial concept in modern model theory. In our paper \cite{kota},  a couple of results concerning the existence of an indiscernible tree are presented. Here in this paper, the notion of mutual indiscernibility is important. 
\begin{definition}
A set $\{B_i: i<\kappa\}$ of indiscernible sequences is said to be mutually indiscernible over $A$ if for every $i<\kappa$, the sequence $B_i$ is indiscernible over $A \cup \bigcup_{i\neq j<\kappa} B_j$.
\end{definition}
The following proposition is simple to prove, but plays an important role in our argument.
\begin{proposition}\label{prop}
For each $i<\kappa$, let $B_i=(b_{i,j})_{j\in\omega}$ be an infinite sequence of tuples of the same length. 
Let $\Gamma((X_i)_{i<\kappa})$ be a set of formulas, where $X_i=(x_{i,j})_{j \in \omega}$ $(i < \kappa)$ and $|x_{i,j}|=|b_{i,j}|$. 
We assume the following property for $\Gamma$:
\begin{itemize}
\item[(*)] 
if $B'_i $ is an infinite subsequence of $B_i$ $(i<\kappa)$ then $(B'_i)_{i<\kappa}$ realizes  $\Gamma((X_i)_{i<\kappa})$. 
\end{itemize}
Then, for any set $A$, we can find $\{C_i:i<\kappa\}\models \Gamma((X_i)_{i<\kappa})$ that is mutually indiscernible over $A$.
\end{proposition}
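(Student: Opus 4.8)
The plan is to first replace the given sequences by mutually $A$-indiscernible ones without destroying the hypothesis, and then read off $\Gamma$ directly from property (*). Concretely, I would invoke the standard extraction of mutually indiscernible sequences: for any array $(b_{i,j})_{i<\kappa,j<\omega}$ and any set $A$ there is a family $\{C_i:i<\kappa\}$, $C_i=(c_{i,k})_{k<\omega}$, that is mutually indiscernible over $A$ and is \emph{locally based} on $(B_i)_{i<\kappa}$, in the sense that for every finite $W\subseteq\kappa$, every $n<\omega$, and every finite set of $L(A)$-formulas, there are strictly increasing tuples $k^i_0<\dots<k^i_{n-1}$ in $\omega$, chosen independently for each $i\in W$, with $(c_{i,0},\dots,c_{i,n-1})_{i\in W}\equiv_A(b_{i,k^i_0},\dots,b_{i,k^i_{n-1}})_{i\in W}$. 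This is obtained in the usual way: one writes down the partial type over $A$ expressing mutual indiscernibility together with local basedness and checks its consistency by compactness, each finite approximation being realised inside the array $(b_{i,j})$ after thinning the relevant finitely many rows by an iterated Ramsey (Erd\H{o}s--Rado) argument, which is possible because every $B_i$ is infinite.

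Granting such a family, the main step is to verify $\{C_i:i<\kappa\}\models\Gamma((X_i)_{i<\kappa})$. Since $\Gamma$ is a set of formulas and each formula has finitely many free variables, it suffices to treat a single $\gamma\in\Gamma$ whose variables lie among $\{x_{i,j}:i\in W,\ j\in J_i\}$ for finite $W\subseteq\kappa$ and finite $J_i\subseteq\omega$. Put $n=1+\max\bigcup_{i\in W}J_i$ and apply local basedness to get increasing selections $k^i_0<\dots<k^i_{n-1}$ with $(c_{i,j})_{i\in W,j\in J_i}\equiv_A(b_{i,k^i_j})_{i\in W,j\in J_i}$. The decisive move is to recognise the right-hand tuple as coming from honest infinite subsequences: for $i\in W$ set $K_i=\{k^i_0,\dots,k^i_{n-1}\}\cup\{l:l>k^i_{n-1}\}$ and let $B'_i$ enumerate $(b_{i,l})_{l\in K_i}$, while for $i\notin W$ put $B'_i=B_i$. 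Because the first $n$ elements of $K_i$ are precisely $k^i_0<\dots<k^i_{n-1}$, the $j$-th term of $B'_i$ equals $b_{i,k^i_j}$ for every $j<n$, in particular for $j\in J_i$. Hypothesis (*) then gives $(B'_i)_{i<\kappa}\models\Gamma$, hence $\models\gamma((b_{i,k^i_j})_{i\in W,j\in J_i})$, and by the displayed type equality $\models\gamma((c_{i,j})_{i\in W,j\in J_i})$. As $\gamma$ was arbitrary, $\{C_i\}\models\Gamma$.

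The part I expect to be genuinely delicate is the extraction in the first step rather than the transfer of $\Gamma$: one must produce mutually indiscernible sequences whose local basedness is witnessed by increasing selections taken \emph{independently within each row}, since that is exactly the form needed to build the $B'_i$. The padding device $K_i=\{k^i_0,\dots,k^i_{n-1}\}\cup\{l:l>k^i_{n-1}\}$ is what reconciles the specific coordinates named by $\Gamma$ with the order-only information preserved by indiscernibility, and it works precisely because (*) is assumed for \emph{all} infinite subsequences rather than for $(B_i)_{i<\kappa}$ alone, so that any finite increasing selection may be completed to an admissible subsequence. Before writing the full proof I would double-check two routine points in which the real content of ``simple to prove'' is hidden: that mutual indiscernibility indeed permits replacing the positions $0,\dots,n-1$ of the $C_i$ by the coordinates $J_i$ without changing the type over $A$, and that the iterated Ramsey thinning can be carried out uniformly over the finitely many rows in $W$.
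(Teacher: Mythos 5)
Your argument is correct and is exactly the standard route the authors have in mind: the paper offers no proof of Proposition \ref{prop} (it is dismissed as ``simple to prove''), and the intended argument is precisely the extraction of a mutually $A$-indiscernible array locally based on $(B_i)_{i<\kappa}$ (as in Chernikov's Lemma~1.2 of the cited NTP$_2$ paper) followed by transferring each $\gamma\in\Gamma$ via a finite selection of indices. Your padding device $K_i=\{k^i_0,\dots,k^i_{n-1}\}\cup\{l:l>k^i_{n-1}\}$, which completes the finitely many selected indices to a genuine infinite subsequence so that hypothesis $(*)$ applies, is the one point of substance, and you handle it correctly.
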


The following observation, shown by Proposition \ref{prop}, is a key in our proof of Theorem \ref{main}. 

\begin{remark}\label{indiscernible witness}\rm
Let $Z$ denote $\Z$ or $\Z\cup\{\pm \infty\}$.
Then, there is an srd-pattern of width $\kappa$ witnessed by a sequence $(\varphi_i(x;y_i))_{i\in\kappa}$ of formulas  if and only if
there are tuples $a$ and $b_{i,j}$ ($i\in\kappa, j\in Z$) with the following properties:
\begin{enumerate}
\item
For all $i\in\kappa$ and $j \leq k \in Z$, $\varphi_i({\cal M}, b_{i,j}) \subset \varphi_i({\cal M}, b_{i,k})$;
\item
$\{B_i:i\in \kappa\}$ is mutually indiscernible, where $B_i=(b_{i,j})_{j\in Z}$;
\item
For all $i\in\kappa$ and $j \in Z$, ${\cal M} \models\varphi_i(a,b_{i,j})$ if and only if $j \geq 0$.
\end{enumerate}
In the equivalence above, we can also assume the following condition in addition to 1 --3.

\begin{enumerate} \setcounter{enumi}3
\item $\{B_{i,+} :i\in\kappa\} \cup \{B_{i,-} :i\in\kappa\}$ is mutually indiscernible over $a$, i.e. $B_{i,+}$ is indiscernible over $\{a\} \cup B_{i,-} \cup \bigcup_{i'\neq i}B_{i'}$ and $B_{i,-}$ is indiscernible over $\{a\} \cup B_{i,+} \cup \bigcup_{i'\neq i}B_{i'}$, where $B_{i,+}=(b_{i,j})_{j\geq 0}$ and $B_{i,-}=(b_{i,j})_{j< 0}$.
\end{enumerate}
\end{remark}

\begin{remark} \label{second}
Let $(D_i)_{i \in I}$ be an increasing sequence of sets in ${\cal M}^n$, where $I$ is a linearly ordered set. 
Then the following sequences are also increasing:
\begin{enumerate}
\item $(D_i \cap D)_{i \in I}$, where $D$ is a subset of ${\cal M}^n$;
\item $(\pi (D_i))_{i \in I}$, where $\pi:{\cal M}^n \to {\cal M}^m$ is the projection $(x_0, \dots, x_{n-1}) \mapsto (x_{i_0}, \dots, x_{i_{m-1}})$. 
\end{enumerate}
\end{remark}

\section{Main Results}

In the following theorem, $\kappa, \kappa_0$ and $\kappa_1$ are arbitrary cardinals, but the interesting case is when 
they are finite.

\begin{theorem}\label{main}
Let $\kappa, \kappa_0$ and $\kappa_1$ be cardinals such that $\kappa+1=\kappa_0 + \kappa_1$.
Suppose that there is an srd-pattern of width $\kappa$ with formulas $\varphi_i(x;y_i)$ $(i\in\kappa)$, where $x=x_0x_1$.
Then, there is $l\in\{0,1\}$ for which we can find formulas $\psi_i(x_l; y'_i)$ $(i\in\kappa_l)$ witnessing the definition of srd-pattern of width $\kappa_l$. 
\end{theorem}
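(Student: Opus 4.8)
The plan is to reduce at once to the indiscernible presentation supplied by Remark~\ref{indiscernible witness}. Writing $x=x_0x_1$ with $|x_0|=m$ and $|x_1|=n$, I would fix tuples $a=a_0a_1$ and $b_{i,j}$ ($i\in\kappa$, $j\in Z$) satisfying conditions 1--4 there, so that $\{B_i:i\in\kappa\}$ with $B_i=(b_{i,j})_{j\in Z}$ is mutually indiscernible, each $\varphi_i(\mathcal{M},b_{i,j})$ is increasing in $j$ and strictly so, and $\mathcal{M}\models\varphi_i(a,b_{i,j})$ iff $j\geq 0$. The two candidate ``reducts'' of row $i$ are the $x_0$-slice $D^0_{i,j}:=\{c_0:\mathcal{M}\models\varphi_i(c_0,a_1,b_{i,j})\}$ and the $x_1$-slice $D^1_{i,j}:=\{c_1:\mathcal{M}\models\varphi_i(a_0,c_1,b_{i,j})\}$, obtained by freezing the complementary coordinate at $a$. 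By Remark~\ref{second} (intersection with the fiber, then projection) each of $(D^0_{i,j})_j$ and $(D^1_{i,j})_j$ is increasing, and condition~3 forces $a_0$ to enter $D^0_{i,\cdot}$ and $a_1$ to enter $D^1_{i,\cdot}$ exactly at $j=0$.

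Next I would assign a side to each row. Using condition~4, the positive part $B_{i,+}$ and the negative part $B_{i,-}$ are each indiscernible over $a$, so along each half the slice $(D^l_{i,j})_j$ is \emph{either} constant \emph{or} strictly increasing at every step. Call $i$ ``$0$-usable'' if $D^0_{i,\cdot}$ is strictly increasing on the positive half or on the negative half, and put $S_0=\{i:i\text{ is }0\text{-usable}\}$, $S_1=\kappa\setminus S_0$. If $i\in S_0$, restricting to the relevant half (together with the boundary, so that $a_0$ sits at threshold $0$) yields an infinite strictly increasing chain in $x_0$ realized by $a_0$; the aim is to show that every $i\in S_1$ analogously yields such a chain in $x_1$ realized by $a_1$, so that both $S_0$ and $S_1$ support genuine srd-material on their respective sides. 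Granting this, the ``$+1$'' falls out of pigeonholing: if $|S_0|\le\kappa_0-1$ and $|S_1|\le\kappa_1-1$ then $\kappa=|S_0\cup S_1|\le\kappa_0+\kappa_1-2=\kappa-1$, a contradiction, so some $|S_l|\geq\kappa_l$ and we may thin to an index set of size exactly $\kappa_l$.

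To finish on the winning side $l$ I would feed the chosen half-slices into Proposition~\ref{prop}: taking $\Gamma$ to encode ``the sequences are strictly increasing chains with $a_l$ at threshold $0$'' (a property inherited by all infinite subsequences, so that hypothesis~$(\ast)$ holds), the proposition returns a mutually indiscernible family realizing $\Gamma$ over the base $a_{1-l}$. This simultaneously restores the mutual indiscernibility demanded by Remark~\ref{indiscernible witness} and, via shifts within the positive halves (indiscernible over $a$, hence fixing $a_{1-l}$), delivers the independence clause~(a): each $\eta\in\omega^{\kappa_l}$ is realized by pushing row $i$'s threshold from $0$ to $\eta(i)$ while keeping the frozen coordinate at $a_{1-l}$. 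Verifying strictness (condition~1 of the remark, already arranged on the chosen half), the realizing point (condition~3, namely $a_l$), and mutual indiscernibility (condition~2, from Proposition~\ref{prop}) then lets me invoke the ``if'' direction of Remark~\ref{indiscernible witness} to produce the desired srd-pattern of width $\kappa_l$ in the variables $x_l$.

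The main obstacle is the dichotomy underlying the side assignment: showing that a row which is \emph{not} $0$-usable must be genuinely $1$-usable. The difficulty is that the full set $\varphi_i(\mathcal{M},b_{i,j})$ can grow strictly while its $a_1$-fiber and its $a_0$-fiber both stay constant, the growth living in fibers over neither $a_0$ nor $a_1$; freezing at $a$ alone does not transfer strictness to either slice. I expect the resolution to exploit mutual indiscernibility (conditions~2 and~4) to relocate a witness of strictness into the fiber over $a$ --- using an automorphism moving a generic strict-step witness $e=e_0e_1$ into position over $a_{1-l}$, or equivalently re-extracting the realizing point together with the array via Proposition~\ref{prop} so that the surviving strictness is carried by the distinguished fiber. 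Making this transfer precise, and checking it respects the half-by-half structure coming from condition~4, is the technical heart of the argument.
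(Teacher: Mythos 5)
There is a genuine gap, and in fact two. First, your side-assignment rests on the dichotomy that a row whose $x_0$-slice over $a_1$ fails to grow must have an $x_1$-slice over $a_0$ that does grow; you correctly identify this as the technical heart and leave it unresolved, and it is not a fixable omission: the strict growth of $\varphi_i({\cal M},b_{i,j})$ can live entirely in fibers over points other than $a_0$ and $a_1$, and no automorphism argument will relocate a strict-step witness $e=e_0e_1$ into the fiber over $a_1$ \emph{while also fixing} $a_1$ and the realizing point, which is what your plan requires. Second, and more fundamentally, even on the ``winning'' side the independence clause (a) is not delivered by your shift argument. To realize $\eta\in\omega^{\kappa_l}$ you need a \emph{single} tuple $c_l$ whose threshold against row $i$ is $\eta(i)$ for \emph{every} $i$ simultaneously, with the complementary coordinate frozen at $a_{1-l}$. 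Shifting $B_{i,+}$ by condition 4 fixes $a$, so it only re-derives $\varphi_i(a,b_{i,k})$ for $k\geq\eta(i)$ (which monotonicity already gives) and never produces the needed negations $\neg\varphi_i(c_l,a_{1-l},b_{i,k})$ for $0\leq k<\eta(i)$; shifting whole rows by condition 2 produces a realizing point but moves $a_{1-l}$. Joint consistency of the sliced family is a global condition that genuinely can fail even when every individual slice is a strict chain, so a per-row classification into $S_0$ and $S_1$ cannot work.

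The paper's proof is organized around exactly this obstruction. It takes a \emph{maximal} $F\subseteq\kappa$ such that all the sets $\Delta_\eta(x_0,a_1)$ with $\mathrm{supp}(\eta)\subseteq F$ are consistent; on $F$ the independence clause for the $x_0$-slices holds by definition (and strictness of the slices follows from it), giving Case 1 when $|F|\geq\kappa_0$. When $|F|<\kappa_0$, for each $\alpha\notin F$ maximality yields a finite \emph{inconsistent} set $\Sigma_\alpha(x_0,a_1,b_{\alpha,0})$ (normalized via indiscernibility so that its only parameters outside a fixed set $B^*$ are $a_1$ and $b_{\alpha,0}$), while $\Sigma_\alpha(x_0,a_1,b_{\alpha,-1})$ is consistent; the formula $\psi_\alpha(x_1,z_\alpha):=\exists x_0\,\bigwedge\Sigma_\alpha(x_0,x_1,z_\alpha)$ then changes truth value at $a_1$ exactly at the threshold, and after reversing the index order these $\psi_\alpha$ form the width-$\kappa_1$ pattern in $x_1$. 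Note that $\psi_\alpha$ is an existential projection of a conjunction involving several rows -- not a slice of $\varphi_\alpha$ -- so the witnessing formulas on side $1$ are of a kind your approach never considers. You would need to replace your row-by-row dichotomy with this maximal-jointly-consistent-subfamily dichotomy (or something equivalent) for the argument to go through.
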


\begin{proof}
Let $Z=\Z \cup \{\pm \infty\}$ and 
choose $b_{i,j}$ $(i\in\kappa, j \in Z)$ and $a$ satisfying the conditions 1 -- 4 in Remark \ref{indiscernible witness}. 
We  write $a$ in the form $a=a_0a_1$, where $|a_0|=|x_0|$ and $|a_1|=|x_1|$.
For $\eta \in \Z^{\kappa}$, let 
\[
\Delta_\eta(x_0, a_1):=\{\varphi_i(x_0, a_1, b_{i,j})^{\text{if }j\geq \eta(i)}: i\in \kappa, j\in Z\}. 
\]
Then choose a maximal $F\subset \kappa$ satisfying the following property:
\begin{itemize}
\item[(*)] For any $\eta\in \Z^{\kappa}$ with $\mathrm{supp}(\eta) \subset F$ (i.e., $\eta(i)=0$ if $i \notin F$), 
$\Delta_\eta(x_0, a_1)$ is consistent. 
\end{itemize}

There are two complementary cases: 

\medbreak\noindent
{\bf Case 1: } 
Suppose $|F|\geq \kappa_0$.
In this case the proof is straightforward for $l=0$, since the formulas 
$\varphi_i(x_0; x_1y_i)$ $(i\in F)$ and the tuples 
$c_{i,j}=a_1b_{i,j}$ $(i \in F, j\in\omega)$ form an srd-pattern of width $\kappa_0$e.

\medbreak\noindent
{\bf Case 2: } 
Suppose $|F|< \kappa_0$. 
Then the set $\kappa \setminus F$ has the cardinality $\geq \kappa_1$. 
Without loss of generality, we can assume $\kappa_1\subset \kappa\setminus F$.
In this case, for any $\alpha\in\kappa_1$, the extension $F\cup\{\alpha\}\supset F$ does not satisfy $(*)$. 
Namely, there is $\eta$ with $\mathrm{supp}(\eta) \subset F \cup \{\alpha\}$, for which the set $\Delta_\eta(x_0,a_1)$ is inconsistent.  
Fix $\alpha\in \kappa_1$ for a while. 
Since $\{\varphi_i({\cal M}, a_1,b_{i,j}):j\in Z\}$ is a strictly increasing sequence for each $i$, we can 
choose $\eta_0\in \Z^F$ and $m \in Z\setminus\{0\}$ such that the subset 
\begin{equation*}
\begin{split}
&\{\varphi_i(x_0, a_1, b_{i,\eta_0(i)}), \neg\varphi_i(x_0, a_1, b_{i,\eta_0(i)-1}): i\in F\}\\
&\quad\cup
\{\varphi_{\alpha}(x_0, a_1, b_{\alpha,m}), \neg\varphi_{\alpha}(x_0, a_1, b_{\alpha,m-1})\}\\
&\quad\cup
\{\varphi_{i}(x_0, a_1, b_{i,0}), \neg\varphi_{i}(x_0, a_1, b_{i,-1}): i\in \kappa\setminus(F \cup\{\alpha\})\}
\end{split}
\end{equation*}
of $\Delta_\eta$ is inconsistent. 
Since the other case is similar and in fact easier, we assume $m > 0$.
Then, by compactness, and since 
$\{B_{i,+} :i\in\kappa\} \cup \{B_{i,-} :i\in\kappa\}$ is mutually indiscernible over $a$, 
we can find finite sets $F_0 \subset F$ and $F_1 \subset \kappa\setminus(F \cup\{\alpha\})$ such that 
\begin{equation*}
\begin{split}
\Sigma_\alpha(x_0) &:=\{\varphi_i(x_0, a_1, b_{i,\eta_0(i)}), \neg\varphi_i(x_0, a_1, b_{i,\eta_0(i)-1}): i\in F_0\}\\
&\quad\cup \{\varphi_{\alpha}(x_0, a_1, b_{\alpha,\infty}), \neg\varphi_\alpha(x_0,a_1, b_{\alpha, 0})\}\\
&\quad\cup \{\varphi_i(x_0, a_1, b_{i,\infty}), \neg\varphi_i(x_0,a_1, b_{i, -\infty}): i\in F_1\} %
\end{split}
\end{equation*}
Now, let \[
B^*:= \{b_{i,j}\}_{ i\in F, j\in \Z}\, \cup \, \{b_{i, -\infty} \}_{i\in \kappa\setminus F} \, \cup \, \{b_{i, \infty} \}_{i\in \kappa\setminus F} %
\]
Then the parameters appearing in $\Sigma_\alpha(x_0)$, other than $B^*$, are  $a_1$ and $b_{\alpha, 0}$.  
(The definition of $B^*$ does not depend on $\alpha$ and hereafter we work with the language  $L(B^*)$.) 
So we write $\Sigma_\alpha$ as $\Sigma_\alpha(x_0,a_1,b_{\alpha,0})$.  By preparing a variable $z_\alpha$ with $|z_\alpha|=|b_{\alpha,j}|$,  let $\psi_\alpha'(x_0,x_1,z_\alpha)$ be the formula $\bigwedge \Sigma_\alpha(x_0,x_1,z_\alpha)$.  
Recall that the set $\Sigma_\alpha(x_0,a_1,b_{\alpha,0})$ is inconsistent. 
However, the set $\Sigma_\alpha(x_0,a_1,b_{\alpha,-1})$ is consistent, by our choice of $F$ and the condition $(*)$. 
By the condition 4 in Remark \ref{indiscernible witness}, this means that $\psi'_\alpha(x_0,a_1,b_{\alpha,j})$ is consistent if and only if $j < 0$. 
So, if we define 
\begin{eqnarray*}
\psi_\alpha(x_1, z_\alpha)&:=& (\exists x_0) \, \psi'_\alpha(x_0,x_1,z_\alpha),\\
c_{\alpha,j}&:=&b_{\alpha,-j-1},
\end{eqnarray*}
then we have 
\[
{\cal M} \models \psi_\alpha(a_1, c_{\alpha,j}) \iff j \geq 0. 
\]
Since this is true for all $\alpha \in \kappa_1$, it follows that $\langle (\psi_\alpha)_{\alpha \in \kappa_1}, (c_{\alpha,j})_{\alpha \in \kappa_1,j \in \Z}\rangle$ satisfies the 
condition 3 in Remark \ref{indiscernible witness}.
The condition 2 is easily shown, since the sequences $(c_{\alpha,j})_{j\in \Z}$ $( \alpha\in \kappa_1) $ are mutually indiscernible over $B^*$.
Finally the condition 1 follows from Remark \ref{second}.
Hence, $\langle (\psi_\alpha)_{\alpha \in \kappa_1}, (c_{\alpha,j})_{\alpha \in \kappa_1,j \in \Z}\rangle$ is an srd-pattern of width $\kappa_1$. 
\end{proof}

\begin{corollary}
\begin{enumerate}
\item
$\kappa^{m+n}_{\mathrm{srd}}(T)+1 
\leq \kappa^{m}_{\mathrm{srd}}(T)+\kappa^{n}_{\mathrm{srd}}(T)$.
\item
If $\kappa^m_{\mathrm{srd}}(T)$ is infinite, then $\kappa^m_{\mathrm{srd}}(T)=\kappa^1_{\mathrm{srd}}(T)=\kappa_{\mathrm{srd}}(T)$.
\end{enumerate}
\end{corollary}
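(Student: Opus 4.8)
The plan is to obtain both items as bookkeeping consequences of Theorem \ref{main}, together with two monotonicity observations and elementary cardinal arithmetic; no new model-theoretic content is required.

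First I would record the two monotonicity facts that make $\kappa^m_{\mathrm{srd}}(T)$ behave like a threshold. The existence of an srd-pattern of width $\kappa$ with $m$-formulas is \emph{decreasing} in $\kappa$: given such a pattern and any $S \subset \kappa$, restricting the formulas and parameters to indices in $S$ again satisfies conditions (a) and (b) of Definition \ref{pattern}, since consistency of the restricted system follows by extending an arbitrary $\eta\colon S \to \omega$ to all of $\kappa$ and invoking consistency of a subset. Hence an srd-pattern of width $\kappa'$ exists if and only if $\kappa' < \kappa^m_{\mathrm{srd}}(T)$. Existence is also \emph{increasing} in $m$: padding each $\varphi_i(x;y_i)$ with a dummy variable (so that each defined set is multiplied by $\mathcal M$) preserves both the strict inclusions and all the consistency requirements, by Remark \ref{second}(1). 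Thus $m \mapsto \kappa^m_{\mathrm{srd}}(T)$ is nondecreasing; in particular $\kappa^1_{\mathrm{srd}}(T) \le \kappa^m_{\mathrm{srd}}(T)$.

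For item 1, write $a = \kappa^m_{\mathrm{srd}}(T)$, $b = \kappa^n_{\mathrm{srd}}(T)$ and $c = \kappa^{m+n}_{\mathrm{srd}}(T)$; we may assume $a,b$ are genuine cardinals, as otherwise the right-hand side is $\infty$ and there is nothing to prove. Choose a cardinal $\kappa$ with $\kappa + 1 = a + b$ (take $\kappa = a+b-1$ when $a+b$ is finite, which is legitimate since $a,b \ge 1$, and $\kappa = a+b$ when $a+b$ is infinite). I claim there is no srd-pattern of width $\kappa$ witnessed by $(m+n)$-formulas. Indeed, were there one, splitting the object variable as $x = x_0x_1$ with $|x_0|=m$ and $|x_1|=n$ and applying Theorem \ref{main} with $\kappa_0 = a$, $\kappa_1 = b$ would produce either an srd-pattern of width $a$ with $m$-formulas or one of width $b$ with $n$-formulas, contradicting the definition of $a$ or of $b$. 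Hence $c \le \kappa$, so $c + 1 \le \kappa + 1 = a + b$, which is item 1.

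For item 2, assume $\kappa^m_{\mathrm{srd}}(T) \ge \omega$. Using item 1 in the weakened form $\kappa^k_{\mathrm{srd}}(T) \le \kappa^{k-1}_{\mathrm{srd}}(T) + \kappa^1_{\mathrm{srd}}(T)$ (discarding the $+1$ on the left, which is valid as $X \le X+1$), a straightforward induction yields $\kappa^k_{\mathrm{srd}}(T) \le k\cdot\kappa^1_{\mathrm{srd}}(T)$ for every $k$. Taking $k=m$ forces $\kappa^1_{\mathrm{srd}}(T)$ to be infinite, since a finite value would make $m\cdot\kappa^1_{\mathrm{srd}}(T)$ finite, contradicting $\kappa^m_{\mathrm{srd}}(T)\ge\omega$. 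Once $\kappa^1_{\mathrm{srd}}(T)$ is infinite the factor $k$ is absorbed, so $\kappa^k_{\mathrm{srd}}(T) \le \kappa^1_{\mathrm{srd}}(T)$ for all $k$; combined with the monotonicity $\kappa^1_{\mathrm{srd}}(T) \le \kappa^k_{\mathrm{srd}}(T)$ this gives $\kappa^k_{\mathrm{srd}}(T) = \kappa^1_{\mathrm{srd}}(T)$ for every $k$, in particular for $k=m$, and taking the supremum over $k$ gives $\kappa_{\mathrm{srd}}(T) = \kappa^1_{\mathrm{srd}}(T)$ as well. The genuinely substantive input is Theorem \ref{main}, which is already in hand; the only real care here is the cardinal arithmetic, where the $+1$ and the implicit ``subtraction'' behave differently in the finite and infinite regimes. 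I expect the delicate point to be the verification that $\kappa^1_{\mathrm{srd}}(T)$ is itself infinite in item 2 — this cannot be assumed a priori and is exactly what forces one to run the chain $\kappa^k_{\mathrm{srd}}(T) \le k\cdot\kappa^1_{\mathrm{srd}}(T)$ before the infinite-absorption step can be invoked.
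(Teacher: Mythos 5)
Your proposal is correct and follows essentially the same route as the paper: item 1 is exactly the paper's argument (apply Theorem \ref{main} with $\kappa_0=\kappa^m_{\mathrm{srd}}(T)$, $\kappa_1=\kappa^n_{\mathrm{srd}}(T)$ to a putative pattern of width $\kappa^m_{\mathrm{srd}}(T)+\kappa^n_{\mathrm{srd}}(T)-1$), merely phrased directly rather than by contradiction. The monotonicity observations and the deduction of item 2 by iterating item 1 are the routine steps the paper leaves implicit, and you carry them out correctly, including the necessary check that $\kappa^1_{\mathrm{srd}}(T)$ is infinite before absorbing the factor $k$.
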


\begin{proof}
We only prove the first item.  We can assume $\kappa^{m+n}_{\mathrm{srd}}(T)$ is finite, since the infinite case is easier.
By way of a contradiction, we assume $\kappa^{m+n}_{\mathrm{srd}}(T) +1>  \kappa^{m}_{\mathrm{srd}}(T)+\kappa^{n}_{\mathrm{srd}}(T)$. 
Then there must be an srd-pattern of width $\kappa:= \kappa^{m}_{\mathrm{srd}}(T)+\kappa^{n}_{\mathrm{srd}}(T) -1$ witnessed by $(m+n)$-formulas. 
By Theorem \ref{main}, using the equation 
$\kappa+1=\kappa^{m}_{\mathrm{srd}}(T)+\kappa^{n}_{\mathrm{srd}}(T)$, 
we would have (i) the existence of an srd-pattern of width $\kappa^{m}_{\mathrm{srd}}(T)$ by $m$-formulas, or (ii) the existence of an srd-pattern of width $\kappa^{n}_{\mathrm{srd}}(T)$ by $n$-formulas.  In either case, we reach a contradiction.
\end{proof}

The above argument can be applied to show the corresponding result for $\kappa_{\mathrm{ird}}^m(T)$. 
The following theorem  on  $\kappa_{\mathrm{ird}}^m(T)$ gives an improvement of  \cite[Theorem 7.10]{shelah}. (In that book he investigated $\kappa^m_{\mathrm{ird}}(T)$ when it is infinite.)  
In the following theorem,  $\kappa, \kappa_0$ and $\kappa_1$ are any cardinals as before.
\begin{theorem}
Assume 
 $\kappa+1=\kappa_0 + \kappa_1$.
Suppose that there is an ird-pattern of width $\kappa$ with formulas $\varphi_i(x;y_i)$ $(i\in\kappa)$, where $x=x_0x_1$.
Then, there is $l\in\{0,1\}$ for which we can find formulas $\psi_i(x_l; y'_i)$ $(i\in\kappa_l)$ witnessing an ird-pattern of width $\kappa_l$. 
\end{theorem}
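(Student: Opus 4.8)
The plan is to mirror the proof of Theorem~\ref{main} exactly, simply dropping the second clause of the srd-pattern definition (the strict-inclusion requirement) throughout, so that only the consistency condition (a) of ird-patterns needs to be tracked. As in the srd case, I would first set up an indiscernible witness: choosing $Z=\Z\cup\{\pm\infty\}$, apply Proposition~\ref{prop} to extract tuples $b_{i,j}$ ($i\in\kappa$, $j\in Z$) and a realization $a=a_0a_1$ such that the $B_i=(b_{i,j})_{j\in Z}$ are mutually indiscernible, $\M\models\varphi_i(a,b_{i,j})$ iff $j\geq 0$, and (strengthening) the positive and negative halves $B_{i,+},B_{i,-}$ are mutually indiscernible over $a$. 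One should first state and prove the ird-analogue of Remark~\ref{indiscernible witness}; since that remark's proof for srd uses (b) only to arrange strict inclusion, the ird-version follows by the same compactness/Ramsey argument with that arrangement omitted. Note that Remark~\ref{second} plays no role here, as there is no inclusion condition to preserve.

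Next I would run the same dichotomy on a maximal $F\subset\kappa$ with the property $(*)$ that $\Delta_\eta(x_0,a_1)$ is consistent for all $\eta\in\Z^\kappa$ supported on $F$, where $\Delta_\eta(x_0,a_1)=\{\varphi_i(x_0,a_1,b_{i,j})^{\text{if }j\geq\eta(i)}:i\in\kappa,j\in Z\}$. If $|F|\geq\kappa_0$, then for $l=0$ the formulas $\varphi_i(x_0;x_1y_i)$ ($i\in F$) together with parameters $c_{i,j}=a_1b_{i,j}$ directly form an ird-pattern of width $\kappa_0$: clause (a) is exactly the content of $(*)$ restricted to the $\kappa_0$-many coordinates in $F$, and there is nothing further to check. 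So the substantive work is again Case~2, $|F|<\kappa_0$, where $|\kappa\setminus F|\geq\kappa_1$ and we may assume $\kappa_1\subset\kappa\setminus F$.

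For each $\alpha\in\kappa_1$, maximality of $F$ gives an inconsistent $\Delta_\eta$ with $\mathrm{supp}(\eta)\subset F\cup\{\alpha\}$; by compactness and the over-$a$ mutual indiscernibility of the halves, I would isolate a finite inconsistent fragment $\Sigma_\alpha(x_0)$ mentioning parameters among a fixed $B^*$ (independent of $\alpha$) together with $a_1$ and a single tuple $b_{\alpha,0}$, exactly as in Theorem~\ref{main}. Setting $\psi'_\alpha(x_0,x_1,z_\alpha):=\bigwedge\Sigma_\alpha(x_0,x_1,z_\alpha)$ and $\psi_\alpha(x_1,z_\alpha):=(\exists x_0)\,\psi'_\alpha$, condition~$(*)$ at $b_{\alpha,-1}$ versus inconsistency at $b_{\alpha,0}$ yields, via the over-$a$ indiscernibility, that $\M\models\psi_\alpha(a_1,c_{\alpha,j})\iff j\geq 0$ where $c_{\alpha,j}=b_{\alpha,-j-1}$. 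This is clause~(a)'s indiscernible witness for the ird-pattern in the variable $x_1$, and mutual indiscernibility of the $(c_{\alpha,j})_j$ over $B^*$ supplies the remaining structural condition; working in $L(B^*)$ absorbs the fixed parameters. The resulting $\langle(\psi_\alpha)_{\alpha\in\kappa_1},(c_{\alpha,j})\rangle$ is then an ird-pattern of width $\kappa_1$, giving $l=1$.

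The main obstacle is verifying that the existential quantifier $(\exists x_0)$ does not destroy clause~(a): one must confirm that the equivalence $\M\models\psi_\alpha(a_1,b_{\alpha,-j-1})\iff j\geq 0$ holds \emph{uniformly} and that arbitrary $\eta$-patterns over $\kappa_1$ remain consistent, not merely the single-index flips. This is where condition~4 (the over-$a$ mutual indiscernibility of the halves) does the real work, exactly as in the srd proof, and I expect the ird verification to be essentially identical since projecting out $x_0$ interacts only with consistency, which ird already controls, rather than with any inclusion structure that srd additionally demanded.
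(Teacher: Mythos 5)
There is a genuine gap, and it sits exactly where you wave at ``a single tuple $b_{\alpha,0}$, exactly as in Theorem~\ref{main}.'' In the srd case the infinite condition $\Delta_\eta$ collapses, for each index $i$, to the two formulas $\varphi_i(x_0,a_1,b_{i,\eta(i)})\wedge\neg\varphi_i(x_0,a_1,b_{i,\eta(i)-1})$, because the sets $\varphi_i({\cal M},a_1,b_{i,j})$ increase in $j$: a conjunction of positive instances is equivalent to the one at the smallest index and a conjunction of negated instances to the one at the largest. That monotonicity is what lets the inconsistent fragment $\Sigma_\alpha$ mention, beyond $B^*$ and $a_1$, only the single parameter $b_{\alpha,0}$, and what lets the finitely many witnesses for $i\in\kappa\setminus(F\cup\{\alpha\})$ be replaced by the two endpoints $b_{i,\pm\infty}$. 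For an ird-pattern none of this is available: compactness only gives you finite sets $U_i,O_i\subset\Z$ of negated and positive instances, and for the distinguished index $\alpha$ the negated instances at nonnegative positions form a block $U_\alpha^{+}=\{0,\dots,k-1\}$ of some length $k$ that cannot be shrunk to a single index. Consequently your $B^*$ (built from $b_{i,\pm\infty}$ over $Z=\Z\cup\{\pm\infty\}$) does not absorb the parameters you need, and your formula $\psi_\alpha(x_1,z_\alpha)$ with $|z_\alpha|=|b_{\alpha,0}|$ is not even well-typed for the actual inconsistency witness.

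The paper's proof repairs exactly this point, and the repair is not cosmetic. It replaces the index order by $Z=\Z_-+\Z+\Z_+$ so that, using condition 4 of Remark~\ref{indiscernible witness}, the finite witness sets for $i\notin F\cup\{\alpha\}$ (and the parts $U_\alpha^-$, $O_\alpha$) can be pushed into the outer copies $\Z_-\cup\Z_+$, all of which go into a $B^*$ that is genuinely independent of $\alpha$; it normalizes $U_\alpha^{+}$ to $\{0,\dots,k-1\}$ with $k=|U_\alpha^{+}|$ minimal; and it then re-indexes the middle copy of $\Z$ into consecutive blocks $c_{\alpha,l}=(b_{\alpha,j})_{j\in\{lk,\dots,lk+k-1\}}$ so that $\psi'_\alpha(x_0,a_1,c_{\alpha,l})$ is consistent iff $l<0$. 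Your outline contains no analogue of this blocking step, and without it the Case~2 construction does not go through. (Your Case~1 and the general strategy --- maximal $F$, the dichotomy, existentially quantifying out $x_0$, and dropping Remark~\ref{second} --- are all fine and agree with the paper.)
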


\begin{proof}
The outline of the proof is quite similar to that of Theorem \ref{main}.
However, for completeness, the details of the proof are provided.
In the present proof, our linear order $Z$ has the form $Z=\Z_- + \Z + \Z_+$, where both $\Z_-$ and $\Z_+$ are copies of $\Z$, and the order is defined 
so that $\Z_- < \Z < \Z_+$.

Choose $b_{i,j}$ $(i\in\kappa, j \in Z)$ and $a=a_0a_1$ satisfying the conditions 1 -- 4 in Remark \ref{indiscernible witness}. 
Then for $\eta \in \Z^{\kappa}$, consider the set $\Delta_\eta(x_0, a_1)$, which is defined in the same way as in the proof of previous theorem. 
Again, choose a maximal $F\subset \kappa$ satisfying the following property:
\begin{itemize}
\item[(**)] For any $\eta\in Z^{\kappa}$ with $\mathrm{supp}(\eta) \subset F$, 
$\Delta_\eta(x_0, a_1)$ is consistent. 
\end{itemize}

\medbreak\noindent
{\bf Case 1: } 
Suppose $|F|\geq \kappa_0$.
The proof is straightforward as the previous theorem so we skip this case.

\medbreak\noindent
{\bf Case 2: } 
Suppose $|F|< \kappa_0$. Without loss of generality, we can assume $\kappa_1\subset \kappa\setminus F$.
In this case, for any $\alpha\in\kappa_1$, 
there is $\eta$ with $\mathrm{supp}(\eta) \subset F \cup \{\alpha\}$, for which the set $\Delta_\eta(x_0,a_1)$ is inconsistent.  
By compactness, we can choose finite sets $F_0 \subset F$, $F_1 \subset \kappa \setminus (F \cup\{ \alpha\})$, and $U_i, O_i \subset \Z$ $(i \in F_0 \cup  F_1 \cup \{\alpha\})$ 
with the following properties:
\begin{enumerate}
\item  $U_i < O_i$, for any $i$;
\item $U_i < 0 \leq O_i$, if $i \in F_1$;
\item The following set $\Sigma_\alpha(x_0)$ is inconsistent: 
\[
\begin{split}
\{ \neg \varphi_i(x_0,a_1,b_{i,j}): i \in F_0, j \in U_i\} & \cup \{\varphi_i(x_0,a_1,b_{i,j}): i \in F_0, j \in O_i\} \\
\cup \{ \neg \varphi_\alpha(x_0,a_1,b_{i,j}):  j \in U_\alpha\} & \cup \{\varphi_\alpha(x_0,a_1,b_{i,j}):  j \in O_\alpha\} \\
\cup \{ \neg \varphi_i(x_0,a_1,b_{i,j}): i  \in F_1, j \in U_i \} & \cup \{\varphi_i(x_0,a_1,b_{i,j}): i \in  F_1, j \in O_i\}. 
\end{split}
\]
\end{enumerate}
If $U_\alpha < 0 \leq O_\alpha$ holds, then $\Sigma_\alpha$ must be consistent, by our choice of $F$. 
So, since the other case is similarly proven, 
we can assume 
 $U_\alpha^+:=\{j \in U_\alpha: 0 \leq j\} \neq \emptyset$.  Moreover $U_\alpha$ is assumed to be chosen so that $|U_\alpha^+|$ is minimum.

Since 
$\{B_{i,+} :i\in\kappa\} \cup \{B_{i,-} :i\in\kappa\}$ is mutually indiscernible over $a$, 
we can assume 
\begin{itemize}
\item $U_i , O_i \subset \Z$ $(i \in F)$;
\item $U_i \subset \Z_-, \ O_i \subset \Z_+$ $ (i \in \kappa \setminus (F \cup \{\alpha\}))$;
\item $U_\alpha^-:=\{j \in U_\alpha:j<0\} \subset \Z^-$, $U_\alpha^+=\{0, \dots,k-2, k-1\} \subset \Z$; 
\item  $O_\alpha \subset \Z_+$.
\end{itemize}
Now, let \[
B^*:= \{b_{i,j}: i\in F, j\in \Z\}\, \cup \, \{b_{i, j} : i\in \kappa\setminus F, j \in \Z_- \cup \Z_+ \} 
\]
Then the parameters appearing in $\Sigma_\alpha(x_0)$, other than $B^*$, are  $a_1$ and $(b_{\alpha, j})_{j \in U_\alpha^+}$. 
So we write $\Sigma_\alpha$ as $\Sigma_\alpha(x_0,a_1,(b_{\alpha,j})_{j \in k})$.  
Let $\psi_\alpha'(x_0,x_1,z_\alpha)$ be the formula $\bigwedge \Sigma_\alpha(x_0,x_1,z_\alpha)$.  
Recall that the set $\Sigma_\alpha(x_0,a_1,(b_{\alpha,j})_{j \in k})$ is inconsistent. 
However, the set $\Sigma(x_0,a_1,(b_{\alpha,j})_{j \in \{-k,\dots , -1\}})$ is consistent, by the choice of $F$. 
By the condition 4, if we set $c_{\alpha,l}=(b_{\alpha,j})_{j \in \{lk, lk+1, \dots, lk+(k-1)\}}$, this means that $\psi'_\alpha(x_0,a_1,c_{\alpha,j})$ is consistent if and only if $j < 0$. 
The rest of the proof is almost identical with that of $srd$-case.
\end{proof}

From this theorem we deduce the following corollary.  The item 2 is essentially shown in [1].
\begin{corollary}
\begin{enumerate}
\item
$\kappa^{m+n}_{\mathrm{ird}}(T) +1 \leq  \kappa^{m}_{\mathrm{ird}}(T)+\kappa^{n}_{\mathrm{ird}}(T)$.
\item 
If $\kappa^m_{\mathrm{ird}}(T)$ is infinite, then $\kappa^m_{\mathrm{ird}}(T)=\kappa^1_{\mathrm{ird}}(T)=\kappa_{\mathrm{ird}}(T)$.
\end{enumerate}
\end{corollary}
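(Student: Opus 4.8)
The plan is to follow the architecture of the proof of Theorem \ref{main} almost verbatim, the only real change being that the absence of a monotonicity requirement forces me to track a genuinely two-sided combinatorial pattern instead of a single cut. First I would set up an ird-analogue of the indiscernible witness of Remark \ref{indiscernible witness}, dropping the monotonicity clause (condition 1) which ird does not need: using Proposition \ref{prop} together with compactness, I arrange the given ird-pattern so that the parameters are indexed by $j$ ranging over $Z=\Z_-+\Z+\Z_+$ (two outer copies of $\Z$ flanking a middle copy), the sequences $B_i=(b_{i,j})_{j\in Z}$ are mutually indiscernible, there is a single realization $a=a_0a_1$ with $\M\models\varphi_i(a,b_{i,j})\iff j\geq 0$, and moreover the halves $B_{i,+},B_{i,-}$ are mutually indiscernible over $a$ (the analogue of condition~4). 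The three-block index set is what gives room to push indices not involved in the combinatorial core out to the outer copies while freezing the active data in the middle $\Z$; this replaces the use of $\pm\infty$ in the srd proof.

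Next I would substitute $a_1$ for $x_1$, form the types $\Delta_\eta(x_0,a_1)$ exactly as before, and choose a maximal $F\subset\kappa$ for which $\Delta_\eta(x_0,a_1)$ stays consistent for every $\eta$ supported in $F$. The dichotomy is then on $|F|$. If $|F|\geq\kappa_0$ I take $l=0$: the formulas $\varphi_i(x_0;x_1y_i)$ with parameters $a_1b_{i,j}$ $(i\in F)$ form an ird-pattern of width $\kappa_0$, since the consistency requirement defining an ird-pattern is precisely the property used to select $F$. This case is immediate and uses no monotonicity.

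The substantial case is $|F|<\kappa_0$, where $|\kappa\setminus F|\geq\kappa_1$ and I may assume $\kappa_1\subset\kappa\setminus F$. For each $\alpha\in\kappa_1$, maximality of $F$ makes $\Delta_\eta(x_0,a_1)$ inconsistent for some $\eta$ supported in $F\cup\{\alpha\}$; by compactness I extract finite $F_0\subset F$, $F_1\subset\kappa\setminus(F\cup\{\alpha\})$ and finite index sets $U_i<O_i$ (positions carrying $\neg\varphi_i$ and $\varphi_i$ respectively), yielding an inconsistent finite set $\Sigma_\alpha(x_0)$. Because consistency is guaranteed as long as $\alpha$ sits at the zero-cut $U_\alpha<0\leq O_\alpha$, the inconsistency must reflect a displacement of $\alpha$'s cut; after a symmetric reduction I arrange $U_\alpha^+=\{0,\dots,k-1\}\neq\emptyset$ with $|U_\alpha^+|$ minimal, and use mutual indiscernibility over $a$ to place all $F$-indices in the middle $\Z$, all other indices in $\Z_-\cup\Z_+$, and $\alpha$'s data so that only the block $(b_{\alpha,0},\dots,b_{\alpha,k-1})$ and $a_1$ remain outside a fixed set $B^*$. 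Setting $\psi'_\alpha(x_0,x_1,z_\alpha):=\bigwedge\Sigma_\alpha$ with $z_\alpha$ a variable for such a block, $\psi_\alpha(x_1,z_\alpha):=\exists x_0\,\psi'_\alpha$, and $c_{\alpha,l}:=(b_{\alpha,j})_{j\in\{lk,\dots,lk+k-1\}}$, I obtain a threshold behaviour in $l$; the blocks $(c_{\alpha,l})_{l\in\Z}$ $(\alpha\in\kappa_1)$ remain mutually indiscernible over $B^*$, so this is an ird-pattern of width $\kappa_1$ in $x_1$ and I take $l=1$.

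The hard part will be the bookkeeping in this last case. Unlike the srd situation, where the strict-increasing condition collapses each index's contribution to a single cut describable by two formulas and lets one appeal to Remark \ref{second}, here the sign pattern of $\varphi_i$ along $B_i$ is genuinely two-sided, so I must carry the pair $(U_i,O_i)$, work with block parameters $c_{\alpha,l}$ rather than single elements, and check both that the threshold survives the quantifier $\exists x_0$ and the re-blocking, and that passing to blocks preserves mutual indiscernibility. The minimality of $|U_\alpha^+|$ together with condition~4 are exactly what make the threshold ``$\psi'_\alpha(x_0,a_1,c_{\alpha,l})$ consistent iff $l<0$'' hold uniformly in $\alpha$; securing that uniformity is the only place real care is required.
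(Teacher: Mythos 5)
Your proposal reconstructs the paper's own route: it proves the ird-splitting theorem (the ird analogue of Theorem \ref{main}) with the same three-block index set $Z=\Z_-+\Z+\Z_+$, the same maximal $F$, the same extraction of $U_i, O_i$ with $|U_\alpha^+|$ minimal, and the same re-blocking into $c_{\alpha,l}$, from which item 1 of the corollary follows by the identical counting argument used for the srd corollary. This is essentially the paper's proof; the only things left implicit are that final one-line deduction and item 2 (which follows from item 1 by induction, and which the paper itself only attributes to Shelah).
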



\begin{thebibliography}{9}
\bibitem {shelah} Shelah, S., \textit{ Classification theory and the numbers of non-isomorphic models.} Amsterdam: North-Holland, (1990).
\bibitem {lynn} Guingona, V., Hill, C. and Scow, L.,  ``Characterizing model-theoretic dividing lines via collapse of generalized indiscernibles.'' Annals of Pure and Applied Logic, 168(5), pp.1091-1111, (2017).

\bibitem {kota} Takeuchi, K. and Tsuboi, A. , ``On the existence of indiscernible trees.'' Annals of Pure and Applied Logic, 163(12), pp.1891-1902, (2012). 
\bibitem{chernikov2} 
Chernikov, Artem. ``Theories without the tree property of the second kind." Annals of Pure and Applied Logic 165.2 (2014): 695-723.

\bibitem{chernikov1}
Chernikov, Artem, and Nicholas Ramsey. ``On model-theoretic tree properties." Journal of Mathematical Logic 16.02 (2016): 1650009.

\bibitem{kaplan}
Kaplan, Itay, Alf Onshuus, and Alexander Usvyatsov. ``Additivity of the dp-rank." Transactions of the American Mathematical Society 365.11 (2013): 5783-5804.

\end{thebibliography}
\end{document}